\newtheorem{remark}{Remark}[section]
\newtheorem{thm}{Theorem}[section]
\theoremstyle{definition}
\theoremstyle{remark}
\numberwithin{equation}{section}
\numberwithin{lem}{section}
\numberwithin{thm}{section}
\numberwithin{equation}{section}
\begin{document}

\title{Explicit Estimation of Error Constants Appearing in Non-conforming Linear Triangular Finite Element}

\author{\|Xuefeng |LIU|, Niigata, Japan, ~~~
        \|Fumio |KIKUCHI|, Tokyo, Japan}


\dedicatory{Cordially dedicated to ...}

\abstract 

The non-conforming linear ($P_1$) triangular FEM can be viewed as a kind of the discontinuous Galerkin method, and is attractive in both theoretical and practical senses. 
Since various error constants must be quantitatively evaluated for its accurate a priori and a posteriori error
estimates, we derive their theoretical upper bounds and some computational results. In particular,
the Babu$\check{s}$ka-Aziz maximum angle condition is required just as in the case of the conforming $P_1$ triangle. 
Some applications and numerical results are also illustrated to see the validity and effectiveness of our analysis.

\endabstract

\keywords
FEM, non-conforming linear triangle, a priori and a posteriori error estimates, error constants,
Raviart-Thomas element
\endkeywords

\subjclass
65N15,65N30
\endsubjclass

\thanks
This research is supported by Grants-in-Aid for Scientific Research (JSPS KAKENHI) (C) 26800090, (B) 16H03950 (the first author)
and (C)(2) 16540096, (C)(2)19540115 (the second author) from Japan Society for the Promotion of Science (JSPS).
\endthanks

\section{Introduction}
\label{sec1}
\footnote{This paper is a revision of the original one  \cite{liu-kikuchi-2007} in proceedings of APCOM'07 conference in conjunction with EPMESC XI, the only digital version of which is however not easy to find.}
As a well-known alternative to the conforming linear ($P_1$) triangular finite element for approximation of 
the first-order Sobolev space ($H^1$), the non-conforming $P_1$ element is considered a classical discontinuous Galerkin finite element \cite{arnold-1985-2} and has various interesting properties from both theoretical and practical 
standpoints \cite{ciarlet-2002,temam-1973}. In particular, its a priori error analysis was performed in fairly early stage of mathematical analysis of FEM (Finite Element Method), and recently a posteriori error analysis is rapidly developing as well. 
For accurate error estimation of such an FEM, various error constants must be evaluated quantitatively \cite{ainsworth-2000,babuska-2001,brenner-2002,ciarlet-2002}.

Based on our preceding works on the constant ($P_0$) and the conforming $P_1$ triangles \cite{kikuchi-liu-2006,kikuchi-liu-2007}, we here give some results for error constants required for analysis of the non-conforming $P_1$ triangle. 
More specifically, we first summarize a priori error estimation of the present non-conforming FEM, where several error constants 
appear. In this process, we use the lowest-order Raviart-Thomas triangular $H$(div) element to deal with the inter-element discontinuity of the approximate functions \cite{brezzi-1991,kikuchi-2007}. Then we introduce some constants related to a reference triangle, some of which are popular in the $P_0$ and the conforming $P_1$ cases. We give some theoretical results for the upper bounds of such constants. Finally, we illustrate some numerical results to support the validity of such upper bounds.
Our results can be effectively used in the quantitative a priori and a posteriori error estimates for the non-conforming $P_1$ triangular FEM.

\section{A PRIORI ERROR ESTIMATION}
\label{sec2}

We here summarize a priori error estimation of the non-conforming $P_1$ triangular FEM. Let $\Omega$ be a bounded convex polygonal domain in $R^2$ with boundary $\partial \Omega$, and let us consider a weak 
formulation of the Dirichlet boundary value problem for the Poisson equation:
Given $f\in L_2(\Omega)$, find $u\in H_0^1(\Omega)$ s.t. 
\begin{equation}
\label{poisson-pro}
(\nabla u, \nabla v) = (f,v) ;~~  \forall v \in H_0^1(\Omega).
\end{equation}

Here, $L_2(\Omega)$ and $H_0^1(\Omega)$ are the usual Hilbertian Sobolev spaces associated to $\Omega$,
$\nabla$ is the gradient operator, and $(\cdot,\cdot)$ stands for the inner products for both $L_2(\Omega)$ and
 $L_2(\Omega)^2$. It is well known that the solution exists uniquely in $H^1_0(\Omega)$ and also belongs to
 $H^2(\Omega)$ for the considered $\Omega$.

Let us consider a regular family of triangulations $\{\mathcal{T}^h\}_{h>0}$ of $\Omega$, to which we associate the non-conforming $P_1$ finite element spaces $\{V^h\}_{h>0}$. Each $V^h$ is constructed over a certain $\mathcal{T}^h$,
and the functions in $V^h$ are linear in each $K\in \mathcal{T}^h$ with continuity only at midpoints of edges, and also vanish at the midpoints on the $\partial \Omega$ to approximate the homogeneous Dirichlet condition \cite{ciarlet-2002,temam-1973}.
Then the finite element solution $u_h \in V^h$ is determined by, for a given $f\in L_2(\Omega)$,
\begin{equation}
\label{nonc-weak}
(\nabla_h u_h, \nabla_h v_h) = (f,v_h) ; ~~ \forall v_h \in V^h
\end{equation}
where $\nabla_h$ is the ``non-conforming" or discrete gradient defined as $L_2(\Omega)^2$-valued operator by the element-wise 
relations $(\nabla_h v)|_K = \nabla(v|_K)$ for $\forall v \in V^h+H^1(\Omega)$ and $\forall K \in \mathcal{T}^h$.
Equation (\ref{nonc-weak}) is formally of the same form as in the conforming case, so that, for error analysis, it is natural to 
consider an appropriate interpolation operator $\Pi_h$, e.g., the Crouzeix-Raviart interpolation, from $H_0^1(\Omega)$ (or its intersection with some other spaces)  to 
$V^h$. However, the situation is not so simple. That is, using the Green formula, we have
\begin{equation}
\label{discontinuity}
(\nabla_h u_h, \nabla_h v_h)  = ( \nabla_h u, \nabla_h v_h ) - \sum_{K\in \mathcal{T}^h}\int_{\partial K} v_h \frac{\partial u}{\partial n}|_{\partial  K} d\gamma; ~ \forall v_h \in V^h, 
\end{equation}
where $\frac{\partial u}{\partial n}|_K$ denotes the trace of the derivative of $u$ in the outward normal direction of $\partial K$,
and $d\gamma$ does the infinitesimal  element of $\partial K$. Conventional efforts of error analysis have been focused
on the estimation of the second term in the right-hand side of  (\ref{discontinuity}), which is absent in the conforming case.
To cope with such difficulty, we introduce the lowest-order Raviart-Thomas triangular $H$(div) finite element space $W^h$ associated
to each $\mathcal{T}^h$ \cite{brezzi-1991,kikuchi-2007}. 
Then, noticing that the normal component of $\forall q_h \in W^h$ is constant and continuous along each inter-element edge, 
we can derive 
\begin{equation}
\label{eq:nonc-hdiv}
(q_h,\nabla _h v_h) + (\text{div } q_h,v_h) =0\:.
\end{equation}
From (\ref{nonc-weak}) and (\ref{eq:nonc-hdiv}), 
\begin{equation}
\label{gradient-error}
(\nabla_h u_h, \nabla_h v_h) = (q_h, \nabla_h v_h)+(\text{div } q_h + f,v_h) \:.  
\end{equation}
By puting $-(\nabla u, \nabla_h v_h)$ on both hand sides of (\ref{gradient-error}), we have, for any $ q_h \in W^h $, $v_h \in V^h$, 
\begin{equation}
\label{gradient-error}
(\nabla_h u_h -\nabla u, \nabla_h v_h) = (q_h- \nabla u, \nabla_h v_h)+(\text{div } q_h + f,v_h) \:. 
\end{equation}

Then by Lemma 6 of \cite{kikuchi-1975}, a refinement of Strang's second lemma\cite{ciarlet-2002}, we have
\footnote{ The proof restricted to (\ref{nonc-weak}) is simple. Let $P_{h}$ be the projection that project $V$ to $V^{h}$, with respect to $(\nabla_{h}\cdot, \nabla_{h}\cdot)$. 
Then $|| \nabla_h u_h - \nabla u  ||^2 = \|\nabla_h P_{h} u - \nabla u \|^{2}+\|\nabla_h (u_h - P_{h} u) \|^{2}$. 
Noticing that $\|\nabla_h (u_h - P_{h} u) \|^{2}=(\nabla_h (u_h - P_{h} u), \nabla_h u_h - \nabla u)$ and applying (\ref{gradient-error}), we can easily get (\ref{priori-est}).
}
\begin{equation}
\label{priori-est}
|| \nabla u -\nabla_hu_h ||^2 = \inf_{v_h\in V^h} || \nabla u - \nabla_h v_h||^2  + 
\left[   \sup_{w_h\in V^h \setminus \{0 \} } \frac{(q_h-\nabla u, \nabla_h w_h) + ( \text{div } q_h + f, w_h)}{|| \nabla_h w_h ||}\right]^2
\:,
 \end{equation}
where $||\cdot||$ stands for the norms of both $L_2(\Omega) $ and  $L_2(\Omega)^2 $.
Using the Fortin operator 
$\Pi_h^F: H(\text{div } ;\Omega) \cap H^{\frac{1}{2}+\delta}(\Omega)^2 \rightarrow W^h(\delta >0)$ (cf. \cite{brezzi-1991}) and the orthogonal projection one $Q_h:L_2(\Omega) \rightarrow X^h:=$ space of step functions over $\mathcal{T}^h$,
we obtain a priori error estimate:
\begin{equation}
\label{priori-est-fortin}
|| \nabla u -\nabla_h u_h  ||^2 \le \inf_{v_h \in V^h} || \nabla u -\nabla_h v_h ||^2 
+ \left[ 
|| \nabla u -\Pi_h^F \nabla u || + 
 \sup_{w_h\in V^h \setminus \{0 \} } \frac{(f-Q_h f, w_h-Q_h w_h)}{|| \nabla_h w_h ||}
\right]^2
\end{equation}
where $q_h$ in (\ref{priori-est}) is taken as $\Pi^F_h \nabla u$.

We can obtain a more concrete error estimate in terms of the mesh parameter
 $h_\ast >0$ (see definition of $h_\ast$ in (\ref{def:mesh_h}); $h$ will be used in a different meaning later)  by deriving estimates such as for $\forall v \in H_0^1(\Omega)\cap H^2(\Omega)$ and 
$\forall g \in H^1(\Omega)+ V^h$,
\begin{equation}
\label{interpolation-est}
\begin{array}{rr}
|| v-\Pi_h v || \le \gamma_0 h_\ast^2 |v|_2, & 
|| \nabla v - \nabla_h \Pi_h v || \le \gamma_1 h_\ast  |v|_2 \\
|| \nabla v- \Pi_h^F \nabla  v || \le \gamma_2 h_\ast |v|_2, & 
|| g - Q_h g || \le \gamma_3 h_\ast  || \nabla_h g||
\end{array}
\end{equation}
where $|\cdot|_k$ denotes the standard semi-norm of $H^k(\Omega)$ ($k \in \mathbf{N}$), and $\gamma_i$'s are
positive error constants dependent only on $\{\mathbf{T}^h\}_{h>0}$.

Then we obtain, for the solution $u\in H^1_0(\Omega)\cap H^2(\Omega)$,
$$
|| \nabla u - \nabla_h u_h || \le 
\left\{
\begin{array}{ll}
h_\ast \{ \gamma_1^2 |u|_2^2 + (\gamma_2 |u|_2 + \gamma_3 ||f||)^2  \}^{1/2}&
\text{ for } f\in L_2(\Omega),\\
h_\ast \{ \gamma_1^2 |u|_2^2 + (\gamma_2 |u|_2 + \gamma_3^2 h_\ast|f|_1)^2  \}^{1/2} &
\text{ for } f\in H^1(\Omega),
\end{array}
\right.
$$
where the term $|u|_2$ can be bounded as $|u|_2 \le ||f||$ for present $\Omega$.\\

We can also use Nitsche's trick to evaluate a priori $L_2$ error of $u_h$\cite{ciarlet-2002,knabner-2003}. 
That is, let us define $\psi \in H^1_0(\Omega)(\cap H^2(\Omega))$ for $e^h:=u-u_h$ by 
$$
(\nabla \psi,\nabla v) = (e^h,v); \forall v \in H_0^1(\Omega),
$$
Then for  $\forall v_h \in V^h$ and $\forall q_h, \tilde{q}_h \in W^h$, by noticing 

$$
\left\{
\begin{array}{l}
 || e^h||^2 = (e^h, e^h) = (\text{div } \tilde{q}_h  + e^h, e^h) + (\tilde{q}_h, \nabla_h e^h  )\\
 (-\nabla_h v_h, \nabla_h e^h) 
+ (\nabla_h v_h, \nabla u) + (-v_h, f) =0 \\
 (- \nabla \psi, \nabla u - q_h ) + (\psi, \text{div } q_h + f  ) =0 \\
 (\nabla_h v_h, -q_h) + (v_h, \text{div }q_h )=0
\end{array}
\right.
$$
we have,
\begin{multline*}
|| e^h||^2 = (\tilde{q}_h -\nabla_h v_h, \nabla_h e^h ) +
(\nabla_h v_h - \nabla \psi, \nabla u - q_h) +\\
(\psi -v_h, \text{div } q_h +f ) + ( \text{div } \tilde{q}_h + e^h,e^h).
\end{multline*}
Substituting $v_h=\Pi_h \psi, q_h = \Pi_h^F \nabla u$ and $\tilde{q}_h=\Pi_h^F \nabla \psi$ above, we find
\begin{multline*}
|| e^h||^2 = (\Pi_h^F \nabla \psi - \nabla\psi +\nabla\psi  - \nabla_h \Pi_h \psi, \nabla_h e^h )
+ (\nabla \Pi_h \psi -\nabla \psi,\nabla u-\Pi_h^F \nabla u)
+\\
(\psi - \Pi_h\psi, f-Q_hf) + (e^h-Q_h e^h,e^h-Q_h e^h)\:,
\end{multline*}
since $\text{div }q_h$=div $\Pi_h^F \nabla u=  -Q_h f$ and div $\tilde{q}_h$ = div $\Pi_h^F\nabla \psi = -Q_h e^h$. Then we have, by  (\ref{interpolation-est}) as 
well as the relations $|u|_2 \le  ||f||$ and $|\psi|_2\le ||e^h||$,
$$
||e^h||^2 \le \left[(\gamma_1+\gamma_2)h_\ast || \nabla_h e^h || + (\gamma_0 + \gamma_1 \gamma_2) h_\ast^2 ||f|| \right] 
||e^h|| + 
\gamma_3^2 h_\ast^2 || \nabla e^h ||^2 \:,
$$
where the term $\gamma_0 h_\ast^2 ||f|| \cdot ||e^h||$ can be replaced with $\gamma_0\gamma_3 h_\ast^3 |f|_1 ||e^h||$ if $f\in H^1(\Omega)$.
This may be considered a quadratic inequality for $e^h$, and solving it gives an expected order estimate 
$||u-u_h||=||e^h||=O(h_\ast^2)$:
$$
||e^h|| \le \frac{h_\ast}{2} (A_1 + \sqrt{A_1^2+4A_2}); \:
A_1:=(\gamma_1+\gamma_2) ||\nabla_h e^h|| 
+ (\gamma_0 + \gamma_1 \gamma_2) h_\ast ||f||,
A_2:=\gamma_3^2 h_\ast || \nabla e^h ||^2 
$$

\vskip .5cm
{\bf RELATION TO RAVIART-THOMAS MIXED FEM}
\vskip .5cm

We have already introduced the Raviart-Thomas space $W^h$ for auxiliary purposes.
But it is well known that the present non-conforming FEM is closely related to 
the Raviart-Thomas mixed FEM \cite{arnold-1985,marini-1985}. Here
we will summarize the implementation of such a mixed FEM by slightly modifying the original nonconforming $P_1$ scheme described by  (\ref{nonc-weak}). The original idea in \cite{arnold-1985,marini-1985} is based on the enrichment by the conforming 
cubic bubble functions with the $L_2$ projection into $W^h$, but we here
adopt non-conforming quadratic bubble ones to make the modification procedure a little simpler.\footnote{In 2015, Hu \& Ma show the same result about the relation between the enriched FEM and Raviart-Thomas FEM, along with the extension to general dimensional space \cite{Hu2015}.}

Firstly, we replace $f$ in  (\ref{nonc-weak}) by $Q_h f$. Then $u_h$ is modified to $u_h^\ast \in V^h$ defined by 
\begin{equation}
\label{modified-weak-form}
(\nabla_h u_h^\ast, \nabla_h v_h) = (Q_h f, v_h) ; \: \forall  v_h \in V^h.
\end{equation}
Secondly, we introduce the space $V^h_B$ of non-conforming quadratic bubble 
functions by defining its basis function $\varphi_K$ associated to each $K\in \mathcal{T}^h$: $\varphi_K$ vanished outside $K$ and its value at $x\in K$ is given by 
\begin{equation}
\label{modify-fun}
\varphi_K(x) = \frac{1}{2}|x-x^G|^2 - \frac{1}{12} \sum_{i=1}^{3} |x^{(i)}-x^G|^2,
\end{equation}
where $|\cdot|$ is the Euclidean norm of $\mathbf{R}^2$, $x^G$ the barycenter of $K$, and $x^{(i)}$ ($i=1,2,3$) the $i$-th vertex of $K$. It is easy to see that the line integration of $\varphi_K$ for each $e$ of $K$ vanishes:
\begin{equation}
\label{integration-constraint}
\int_e \varphi_K ~ \mbox{d}\gamma =0\:.
\end{equation}
Now the enriched non-conforming finite element space $\tilde{V}^h $ is defined by
the following linear sum:
\begin{equation}
\tilde{V}^h = V^h \oplus V_B^h \:.
\end{equation}
By  (\ref{integration-constraint}) and the Green formula, we find the following orthogonality relation for $(\nabla_h \cdot, \nabla_h \cdot)$:
\begin{equation}
\label{orthogonality}
(\nabla_h v_h,\nabla_h \beta_h) = 0; \: \forall v_h \in V^h, \forall \beta_h \in V_B^h\:.
\end{equation}
Then the modified finite element solution $\tilde{u}_h\in \tilde{V}^h$ is defined by
\begin{equation}
\label{new-fem-weak}
(\nabla_h \tilde{u}_h, \nabla_h \tilde{v}_h) = (Q_h f, \tilde{v}_h); \: \forall \tilde{v}_h \in \tilde{V}^h\:.
\end{equation}

Thanks to  (\ref{orthogonality}), the present $\tilde{u}_h$ can be obtained 
as the sum:
\begin{equation}
\label{solution-descomp}
\tilde{u}_h = u_h^\ast +\alpha_h\:,
\end{equation}
where $u_h^\ast \in V^h$ is the solution of (\ref{modified-weak-form}), and 
$\alpha_h \in V_B^h$ is determined by 
\begin{equation}
\label{complement-equ}
(\nabla_h \alpha_h, \nabla_h \beta_h) = (Q_h f, \beta_h); \: \forall \beta_h \in V_B^h,
\end{equation}
i.e., completely independently of $u_h^\ast$. Moreover, $\alpha_h$ can be decided by element-by-elment comupations. More specifically, denoting $\alpha_h|_K$ as $\alpha_K \varphi_K|K$,   (\ref{complement-equ}) leads to 
\begin{equation}
\alpha_K (\nabla \varphi_K, \nabla \varphi_K)_K  = (Q_h f, \varphi_K)_K ;\: \forall K \in \mathcal{T}^h,
\end{equation}
where $(\cdot, \cdot)$ denotes the inner products of both $L_2(K)$ and $L_2(K)^2$.

Define $\{p_h, \overline{u}_h\} \in L_2(\Omega)^2 \times X^h$ by
\begin{equation}
\label{mixed-fem-solution}
p_h = \nabla_h \tilde{u}_h, \: \overline{u}_h = Q_h \tilde{u}_h\:.
\end{equation}
By appying the Green formula to  (\ref{new-fem-weak}), we can show that $p_h\in W^h$, and also that the present pair $\{p_h,\overline{u}_h \}$ satisfies the 
determination equations of the lowest-order Raviart-Thomas mixed FEM:
\begin{equation}
\label{mixed-fem}
\left\{
\begin{array}{ll}
(p_h,q_h)  + (\overline{u}_h,\text{div } q_h) =0 ; & \forall q_h \in W^h,\\
(\text{div } p_h, \overline{v}_h) = - (Q_h f, \overline{v}_h); & \forall \overline{v}_h \in X^h.
\end{array}
\right.
\end{equation}
By the uniqueness of the solutions, $\{p_h,\overline{u}_h \}$ is nothing but the unique solution of  (\ref{mixed-fem}).

In conclusion, denoting the constant value of $Q_hf|K$ by $\overline{f}_K \left(=\int_Kf\:\mbox{d}x/\text{meas}(K) \right)$, we have for $\forall K\in \mathcal{T}^h$ and
$\forall x \in K$ that
\begin{equation*}
\left\{
\begin{array}{l}
 \alpha_K = -\displaystyle{\frac{1}{2}} \overline{f}_K,\: \\
 \displaystyle{
\tilde{u}_h (x) = u_h^\ast(x) + \alpha_K \psi_K(x)=
u_h^\ast(x) -\frac{1}{4}\overline{f}_K(|x-x^G|^2 -\frac{1}{6}\sum_{i=1}^3|x^{(i)}-x^G|^2),}
\end{array}
\right.
\end{equation*}
and
\begin{equation}
\label{mixed-fem-solution-detail}
\left\{
\begin{array}{l}
\displaystyle{
p_h(x) = \nabla_h u_h^\ast (x) -\frac{1}{2}\overline{f}_K(x-x^G),
}
\\
\displaystyle{
\overline{u}_h(x) = u_h^\ast(x^G) - \frac{1}{16} \overline{f}_K(|x^G|^2-\frac{1}{3}\sum_{i=1}^3|x^{(i)}|^2),
}
\end{array}
\right.
\end{equation}
which coincide with those in \cite{marini-1985} and are easy to compute by post-processing.

\vskip 0.5cm

{\bf  A Posteriori error estimation}

The consideration in the preceding section suggests the a posteriori error estimation based 
on  the hypercircle method \cite{destuynder-1999,kikuchi-2007}.

Taking  notice the fact that $p_h \in W^h$ obtained in the preceding section belongs to 
$H(\text{div };\Omega)$ with $\text{div } p_h=-Q_h f$, we find that, for 
 $\forall v \in H_0^1(\Omega)$,
\begin{equation}
\label{hypercircle}
|| \nabla v -p_h||^2 =|| \nabla (v-u^h) ||^2 + || \nabla u^h -p_h||^2, \quad
||\nabla u^h - \frac{1}{2}(\nabla v + p_h) || = \frac{1}{2}|| \nabla v -p_h ||\:,
\end{equation}
where $u^h\in H_0^1(\Omega)$ is the solution of (\ref{poisson-pro}) with $f$
replaced by $Q_h f$:
\begin{equation}
(\nabla u^h,\nabla v)= (Q_hf,v); \: \forall v\in H^1_0(\Omega)\:.
\end{equation}
 (\ref{hypercircle}) implies that the three points $\nabla u^h$,
$\nabla v$ and $p_h$ in $L_2(\Omega)^2$ make a hypercircle, the first having a right inscribed angle. Noting that $(f-Q_f,v)=(f-Q_hf, v-Q_hv)$ for 
$\forall v\in H_0^1(\Omega)\subset L_2(\Omega)$, we have by 
(\ref{priori-est-fortin}) that
\begin{equation}
|u-u^h|_1=|| \nabla (u-h^h)||
 \le \gamma_3 h_\ast || f-Q_hf|| \quad (\le \gamma_3^2 h_\ast^2 |f|_1 \text{ if } f \in H^1(\Omega) )\:.
\end{equation}
Taking an approxpriate $v\in H_0^1(\Omega)$, we obtain a posteriori error estimates related $p_h=\nabla _h \tilde{u}_h$:
\begin{equation}
|| \nabla u -p_h  || \le 
|| \nabla (u -u^h)  || + || \nabla u^h -p_h  || \le 
|| \nabla (u-u^h)   || + || \nabla v -p_h ||
\end{equation}
\begin{equation}
|| \nabla u - \frac{1}{2}(\nabla v + p_h) || \le
|| \nabla(u-u^h) || +  \frac{1}{2} || \nabla v -p_h || \:.
\end{equation}
A typical example of $v$ is the conforming $P_1$ finite element solution
 $u_h^C\in V_C^h$, where $V_c^h$ is the conforming $P_1$ space over 
$T^h$. Another example is a function $v_C^h\in V_C^h$ obtained by appropriate post-processing of $u_h$ or $u_h^\ast$, such 
as nodal averaging or smoothing. A cheap method of constructing a nice $v_C^h$ may be also an interesting subject. 
Again, we need the constant $\gamma_3$ to evaluate the term $||\nabla(u-u^h) ||$ above.
If we use $\nabla_hu_h$ based on the original $u_h \in V^h$ in (\ref{nonc-weak}), instead of the modified one $\tilde{u}_h\in V^h$, we must 
evaluate some additional terms. Fortunately, such evaluation can be done explicitly by using $\gamma_3$ and some positive constants related to 
$\{ \psi_K \}_{K\in \mathcal{T}^h}$.\\

{\bf Error Constants}
To analyze the error constants in (\ref{priori-est-fortin}), let us consider their 
element-wise counterparts. Let $h$, $\alpha$ and $\theta$ be positive
constants such that 
\begin{equation}
\label{def-range}
h>0,\quad 0<\alpha\le 1,\quad  (\frac{\pi}{3} \le ) \cos ^{-1} \frac{\alpha}{2} \le \theta < \pi \:.
\end{equation}
Then we define the triangle $T_{\alpha,\theta,h}$ by $\triangle OAB$ with
three vertices $O(0,0),A(h,0)$ and $B(\alpha h \cos \theta, \alpha h \sin \theta)$.
From (\ref{def-range}), $AB$ is shown to be the edge of maximum length, i.e.,
$\overline{AB} \ge h \ge \alpha h$, so that $h=\overline{OA}$ here
denotes the medium edge length, unlike the usual usage as the largest one \cite{ciarlet-2002}. A point on the closure $\overline{T}_{\alpha,\theta,h}$
is denoted by $x=\{x_1,x_2\}$, and the three edges $e_i$'s ($i=1,2,3$) are defined by $\{e_1,e_2,e_3\}=\{OA,OB,AB\}$. 

By an appropriate congruent
transformation in $\mathbf{R}^2$, we can configure any triangle as 
$T_{\alpha,\theta,h}$. As the usage in \cite{babuska-1976}, we will use abbreviated notations $T_{\alpha,\theta}=T_{\alpha,\theta,1}$, 
$T_{\alpha}=T_{\alpha,\frac{\pi}{2}}$ and $T=T_1$ 
(Fig.\ref{triangle-configure}). We will also use the notations $||\cdot||_{T_{\alpha,\theta,h}}$ and $|\cdot|_{k,T_{\alpha,\theta,h}}$
 as the norms of $L_2(T_{\alpha,\theta,h})$ and semi-norms of $H^k(T_{\alpha,\theta,h})$, where the subscript $T_{\alpha,\theta,h}$ will be usually omitted.
 
\begin{figure}[th]
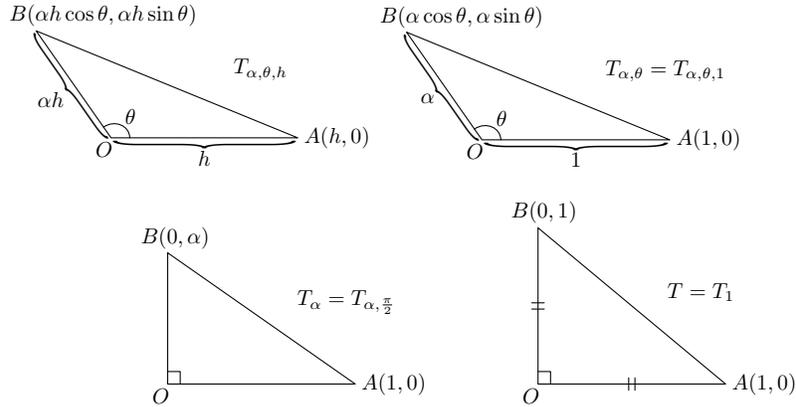

\label{triangle-configure}
\begin{picture}(200,170)(50,20)
\put(0,110){\includegraphics[width=1.9in]{tri1.ps}} 
\put(140,110){\includegraphics[width=1.9in]{tri2.ps}}
\put(50,20){\includegraphics[width=1.5in]{tri3.ps}}
\put(190,20){\includegraphics[width=1.5in]{tri4.ps}}
\end{picture}
\caption{Notations for triangles:  $T_{\alpha,\theta}=T_{\alpha,\theta,1}$, 
$T_{\alpha}=T_{\alpha,\frac{\pi}{2}}$, $T=T_1$ }
\end{figure}

Let us define the following closed linear spaces fro functions over $T_{\alpha,\theta,h}$:
\begin{eqnarray}
V_{\alpha,\theta,h}^0 & = & \{ v\in H^1(T_{\alpha,\theta,h}) \:|\: \int_{T_{\alpha,\theta,h}}v(x)\:\mbox{d}x=0\}
\end{eqnarray}
\begin{eqnarray}
V_{\alpha,\theta,h}^i & = & \{ v\in H^1(T_{\alpha,\theta,h})  \:|\: \int_{e_i}v(s)\:\mbox{d}s=0 \} \quad (i=1,2,3)\\
V_{\alpha,\theta,h}^{\{1,2\}} & = & \{ v\in H^1(T_{\alpha,\theta,h})  \:|\: \int_{e_1}v(s)\:\mbox{d}s=\int _{e_2}v(s)\:\mbox{d}s=0\}\\
V_{\alpha,\theta,h}^{\{1,2,3\}} & = & \{ v\in H^1(T_{\alpha,\theta,h})  \:|\: \int_{e_i}v(s)\:\mbox{d}s=0\quad (i=1,2,3)\}\\
\label{space-v4}
V_{\alpha,\theta,h}^4 & = & \{ v\in H^2(T_{\alpha,\theta,h}) |\int_{e_i}v(s)\:\mbox{d}s=0\quad (i=1,2,3)\}
\end{eqnarray}
We will again use abbreviations like $V_{\alpha,\theta}^0=V_{\alpha,\theta,1}^0$,
$V_{\alpha}^0=V_{\alpha,\frac{\pi}{2}}^0$, $V^0=V_{1}^0$, etc.\\

Let us consider the $P_0$ interpolation operator $\Pi_{\alpha,\theta,h}^0$ and non-conforming $P_1$ one $\Pi_{\alpha,\theta,h}^{1,N}$ for functions on $T_{\alpha,\theta,h}$ \cite{brenner-2002,ciarlet-2002}: $\Pi_{\alpha,\theta,h}^0 v$ for  $\forall v \in H^1(T_{\alpha,\theta,h})$ is a constant function such that
\begin{equation}
(\Pi_{\alpha,\theta,h}^0 v)(x) = \frac{ \int_{T_{\alpha,\theta,h}} v(y) \mbox{d}y }{ |T_{\alpha,\theta,h}|},\quad \forall x\in T_{\alpha,\theta,h},
\end{equation}
while $\Pi_{\alpha,\theta,h}^{1,N} v$ for $\forall v \in H^1(T_{\alpha,\theta,h})$ is a linear function such that 
\begin{equation}
\int_{e_i} (\Pi_{\alpha,\theta,h}^{1,N} v)(s)ds=\int_{e_i}v(s)ds \text{ for } i=1,2,3.
\end{equation}

To analyze these interpolation operators, let us estimate the positive constants defined by 
\begin{eqnarray}
C_J(\alpha,\theta,h)&\!\!=&\!\!\!\!\sup_{v\in V_{\alpha,\theta,h}^J\setminus \{0\}} \frac{ ||v||}{|v|_1} \quad (J=0,1,2,3,\{1,2\},\{1,2,3\}),\\
C_4(\alpha,\theta,h)&\!\!=&\!\!\!\!\sup_{v\in V_{\alpha,\theta,h}^4\setminus \{0\}}\frac{|v|_1}{|v|_2}, \:\:
C_5(\alpha,\theta,h)=\!\!\!\!\!\sup_{v\in V_{\alpha,\theta,h}^4\setminus \{0\}}\frac{||v||}{|v|_2}\:.
\end{eqnarray}

We will again use abbreviated notations $C_{J}(\alpha,\theta)=C_J(\alpha,\theta,1)$,
$C_J(\alpha)=C_J(\alpha,\pi/2)$, $C_J=C{(1)}$ and also $C_{J,\alpha,\theta}:=C_J(\alpha,\theta)$
for every possible subscript $J$.

By a simple scale change, we find that $C_J(\alpha,\theta,h)=hC(\alpha,\theta)(J\ne5)$  and 
$C_5(\alpha,\theta,h)=h^2 C_5(\alpha,\theta)$. Now, by noticing $v-\Pi_{\alpha,\theta,h}^0 v \in V_{\alpha,\theta,h}^0$
 for $v\in H^1(T_{\alpha,\theta,h})$ and 
$v-\Pi_{\alpha,\theta,h}^{1,N} v \in V_{\alpha,\theta,h}^4$ for $v\in H^2(T_{\alpha,\theta,h})$, 
we can easily have the popular interpolation error estimates on $T_{\alpha,\theta,h}$\cite{brenner-2002,ciarlet-2002}.
\begin{eqnarray}
\label{nonc-interp-est-c0}
|| v-\Pi_{\alpha,\theta,h}^0 v|| \le C_0(\alpha,\theta) h |v|_1; \quad \forall v \in H^1(T_{\alpha,\theta,h})\\
\label{nonc-interp-est-c4}
| v-\Pi_{\alpha,\theta,h}^{1,N} v|_1 \le C_4(\alpha,\theta) h |v|_2;  \quad \forall v \in H^2(T_{\alpha,\theta,h})\\
\label{nonc-interp-est-c5}
|| v-\Pi_{\alpha,\theta,h}^{1,N} v|| \le C_5(\alpha,\theta) h^2 |v|_2;  \quad  \forall v \in H^2(T_{\alpha,\theta,h})
\end{eqnarray}
We can show that the following relations hold for the constants $C_{J,\alpha,\theta}\left(:=C_J(\alpha,\theta)\right)$:
\begin{equation}
\label{nonc-constant-est}
C_{4,\alpha,\theta} \le C_{0,\alpha,\theta},\quad C_{5,\alpha,\theta}\le C_{0,\alpha,\theta}C_{\{1,2,3\},\alpha,\theta} \le C_{0,\alpha,\theta} C_{\{1,2\},\alpha,\theta}\:.
\end{equation}
An estimation rougher than the latter of  (\ref{nonc-constant-est}) is 
$C_{5,\alpha,\theta} \le C_{0,\alpha,\theta} \min_{i=1,2,3}C_{i,\alpha,\theta}$.
To show former of (\ref{nonc-constant-est}), we first derive $\int_{T_{\alpha,\theta}} \partial v/\partial x_i \mbox{d}x =0$ 
for $\forall v \in V_{\alpha,\theta}^4 (i=1,2)$ by considering the definition in  (\ref{space-v4}) and 
applying the Gauss formula.
Then we can easily obtain the desired result by noticing the definition of $C_0(\alpha,\theta)$. 
To  derive the latter of (\ref{nonc-constant-est}), we should evaluate $||v||/|v|_1$ and $|v|_1/|v|_2$ for
$\forall v \in V_{\alpha,\theta}^4 \:(i=1,2)$. The former quotient can be evaluated by using $C_{\{1,2,3\}}(\alpha,\theta)$, while the latter can be done by $C_4(\alpha,\theta)$ together with 
former of (\ref{nonc-constant-est}).
Clearly, $C_{\{1,2,3\}}(\alpha,\theta)\le C_{\{1,2\}}(\alpha,\theta)$, then we have the latter of
 (\ref{nonc-constant-est}).

Thus we can give quantitative interpolation estimates from (\ref{nonc-interp-est-c0}) throught 
(\ref{nonc-interp-est-c5}), if we succeed in evaluating or bounding the constants $C_J(\alpha,\theta)$'s
explicitly for all possible $J$. Among them, $C_0(\alpha,\theta)$ and $C_{\{1,2\}}(\alpha,\theta)$
are important as may be seen from (\ref{nonc-constant-est}). Notice that each of such constants
can be characterized by minimization of a kind of Rayleigh quotient \cite{babuska-1976,nakao-2001,nakao-2001-2}. Then it is equivalent to finding the minimum eigenvalue of a certain eigenvalue problem expressed by a weak formulation for a partial differential equation with some auxiliary conditions.

Moreover, we already derived some results for $C_{i}(\alpha,\theta)$ for $i=0,1,2$ (\cite{kikuchi-liu-2006,kikuchi-liu-2007}) 
\footnote{K. Kobayashi also develops upper bounds for the error constants; see, e.g., \cite{Kobayashi2011,Kobayashi2015}.}. In particular, $C_0=1/\pi$, and $C_1(=C_2)$
is equal to the maximum positive solution of the equation $1/\mu + \tan (1/\mu)=0$ for $\mu$.
The constants $C_J(\alpha,\theta)$'s for $J=0,1,2,3,4,5,\{1,2\},\{1,2,3\}$ are bounded uniformly for $\{\alpha,\theta\}$. 
More specifically, their explicit upper bounds are given in terms of $\alpha,\theta$ and 
their values at $\{\alpha,\theta\}=\{1,\pi/2\}$. Furthermore,$C_J(\alpha)$'s except for $J=4$
are monotonically increasing in $\alpha$.
Asymptotic behaviors of the constants $C_J(\alpha)$'s for $\alpha\downarrow 0$ 
can be also analyzed in \cite{kikuchi-liu-2007}. As a result, the interpolation by the non-conforming
$P_1$ triangle is robust to the distortion of $T_{\alpha,\theta}$. This fact does not necessarily imply
the robustness of the final error estimates for $u-u_h$, since analysis of the Fortin interpolation has not been performed yet.

\begin{remark}
Instead of $\Pi_{\alpha,\theta,h}^{1,N}$, it is also possible to consider an interpolation operator using the function values at midpoints of edges. Such an operator is definable for continuous functions over 
$\overline{T}_{\alpha,\theta,h}$, but not so for functions in $H^1(T_{\alpha,\theta,h})$.
Moreover, its analysis would be different from the for $\Pi_{\alpha,\theta,h}^{1,N}$.
\end{remark}

{\bf Determination of $C_{\{1,2\}}$}
From the preceding observations, we can give explicit upper bounds of various interpolation constants associated to the non-conforming $P_1$ triangle, provided that the value of $C_{\{1,2\}}$ is determined. This becomes indeed possible by adopting essentially the same idea and techniques to determine
$C_0$ and $C_1(=C_2)$:
\begin{thm}
$C_{\{1,2\}}=C_{\{1,2\}}(1,\pi/2,1)$ is equal to the maximum positive solution of the transcendental equation for $\mu$:
\begin{equation}
\label{determ-equ}
\frac{1}{2\mu} + \tan\frac{1}{2\mu} =0
\end{equation}
The above implies that $C_{\{1,2\}}=\frac{1}{2}C_1(=\frac{1}{2}C_2)$, and hence is bounded as, with numerical verification,
\begin{equation}
0.24641 < C_{\{1,2\}} < 0.24647\:.
\end{equation} 

\end{thm}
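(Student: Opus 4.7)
The strategy is a two-step symmetry reduction that exploits the reflection $R\colon(x,y)\mapsto(y,x)$ of $T$ onto itself (swapping $e_1$ and $e_2$) to pull the $C_{\{1,2\}}$-problem back to the already known $C_1$-problem on a half-scale copy of $T$. By the standard variational characterization, $C_{\{1,2\}}^{-2}$ is the smallest eigenvalue of the mixed boundary value problem $-\Delta v=\lambda v$ on $T$ with Neumann on $e_3$, constant-Neumann (Lagrange multipliers) on $e_1,e_2$, and zero mean on each of $e_1,e_2$. Because this problem is $R$-invariant and both integral constraints survive the decomposition $v=v_s+v_a$ (symmetric plus antisymmetric part), restricting to the half-triangle $T'=T\cap\{y\le x\}$---a right isosceles triangle similar to $T$ with scale factor $1/\sqrt{2}$---splits the spectrum into symmetric and antisymmetric sub-problems on $T'$.

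On $T'$ the symmetric sub-problem has Neumann on the diagonal $OM$ (from the reflection axis), Neumann on the half-hypotenuse $MA\subset e_3$, and constant-Neumann plus $\int_{OA}v\,ds=0$ on $OA\subset e_1$; read in $T'$'s own $T$-labelling (right angle at $M$, hypotenuse $OA$), this is precisely the $C_3$-problem on $T'$. The antisymmetric sub-problem replaces the Neumann on $OM$ by Dirichlet, so its trial space is a proper subspace of the symmetric one and $\lambda_1^{(a)}\ge\lambda_1^{(s)}$. Hence the first $C_{\{1,2\}}$-eigenvalue is the symmetric one, and $C_{\{1,2\}}(T)=C_3(T')=C_3(T)/\sqrt{2}$ by the linear scaling $C_J(T_{1,\pi/2,h})=h\,C_J$.

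Running the identical $R$-decomposition on the (also $R$-invariant) $C_3$-problem on $T$, its symmetric part restricts to $T'$ with Neumann on $OM$ and on $OA\subset e_1$, plus constant-Neumann and zero-mean on $MA\subset e_3$: in $T'$'s labelling, exactly the $C_1$-problem. Hence $C_3(T)=C_1(T')=C_1/\sqrt{2}$, and chaining yields $C_{\{1,2\}}=C_1/2$. Equation~(\ref{determ-equ}) then follows by substituting $\mu_1=2\mu$ into the known $1/\mu_1+\tan(1/\mu_1)=0$, and the numerical bracket is simply half of the verified bracket for $C_1$ from the preceding work.

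The subtler step is the symmetric-versus-antisymmetric comparison in the second reduction: the antisymmetric trial space on $T'$ carries a Dirichlet condition on $OM$ but no integral constraint, whereas the symmetric one has the integral constraint $\int_{MA}v\,ds=0$ but no Dirichlet, so neither space sits inside the other and the clean trial-space inclusion used in the first reduction is unavailable. I would close this by computing the antisymmetric first eigenvalue directly: symmetric reflection of $T'$ across its Neumann hypotenuse $OA$ produces a kite, and a $45^{\circ}$ rotation turns that kite into an axis-aligned square of side $1/\sqrt{2}$ carrying Dirichlet on two adjacent sides and Neumann on the other two. Separation of variables on this square immediately gives first eigenvalue $\pi^{2}$, which comfortably exceeds $2\lambda_1(C_1,T)=2/C_1^{2}\approx 8.23$, confirming that the symmetric sector wins and closing the argument.
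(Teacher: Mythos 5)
Your argument is correct, but it takes a genuinely different route from the paper's. The paper reuses the explicit spectral technique of its earlier work on $C_0$ and $C_1=C_2$: it derives the characteristic transcendental equation $1+\frac{1}{2\mu}\sin\frac{1}{\mu}-\cos\frac{1}{\mu}=0$ for the doubly constrained eigenproblem on $T$ directly, then factors it by double-angle formulas into $(2\sin\frac{1}{2\mu}+\frac{1}{\mu}\cos\frac{1}{2\mu})\sin\frac{1}{2\mu}=0$, so that (\ref{determ-equ}) and the identity $C_{\{1,2\}}=\frac{1}{2}C_1$ emerge as an algebraic byproduct (the roots of $\sin\frac{1}{2\mu}=0$ being too small to matter). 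You instead prove $C_{\{1,2\}}=\frac{1}{2}C_1$ geometrically by two reflection reductions, $C_{\{1,2\}}(T)=C_3(T')=C_3(T)/\sqrt{2}$ and $C_3(T)=C_1(T')=C_1/\sqrt{2}$, and only then read off (\ref{determ-equ}) and the numerical bracket from the known results for $C_1$; this explains the factor $\frac{1}{2}$ rather than observing it, and avoids any new eigenfunction computation. You also correctly isolate the one non-trivial point: in the second reduction the single constraint $\int_{e_3}v\,ds=0$ is vacuous on antisymmetric functions, so the trial-space inclusion that settles the first reduction is unavailable, and the antisymmetric sector must be bounded separately. Your unfolding of the Dirichlet--Neumann half-triangle across its Neumann hypotenuse into a square with Dirichlet on two adjacent sides and Neumann on the other two gives $\lambda_1^{(a)}\ge\pi^2$, and the comparison $\pi^2>\lambda_1^{(s)}=2/C_1^2$ is equivalent to $C_1>\sqrt{2}/\pi\approx 0.4502$, which follows non-circularly from the prior characterization of $C_1$ as the largest positive root of $1/\mu+\tan(1/\mu)=0$ (since $t+\tan t$ is increasing on $(\pi/2,\pi)$ and positive at $t=\pi/\sqrt{2}$). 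The cost of your route is exactly this quantitative eigenvalue comparison together with the routine checks that the reflection preserves the constraint spaces and that even/odd extensions remain in $H^1$; the cost of the paper's route is redoing the explicit two-dimensional spectral calculation. Both are sound, and yours would make a clean self-contained replacement for the paper's citation-dependent derivation.
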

\begin{remark} Thus $1/4$ is a simple but nice upper bound.	Numerically, we have
$C_{\{1,2\}}=0.2464562258\cdots$. 

\end{remark}
\begin{proof} By the use of the technique for determination of $C_0$ and $C_1=C_2$
in \cite{kikuchi-liu-2006,kikuchi-2007}, we obtain the following equation for $\mu$:
\begin{equation}
1+\frac{1}{2\mu}\sin\frac{1}{\mu} - \cos \frac{1}{\mu}=0\:,
\end{equation}
whose maximum positive solution is the desired $C_{\{1,2\}}$. By the double-angle
formulas, the above is transformed into 
\begin{equation}
(2\sin\frac{1}{2\mu} + \frac{1}{\mu} \cos\frac{1}{2\mu}) \sin\frac{1}{2\mu}=0\:.
\end{equation}
It is now easy to derive (\ref{determ-equ}), and also to draw other conclusions by using the resutls in \cite{kikuchi-liu-2006,kikuchi-2007}.
\end{proof}

{\bf Analysis of Fortin's interpolation }
This section is devoted to the analysis of Fortin's interpolation operator $\Pi_{\alpha,\theta}^F$ (\cite{brezzi-1991}) for 
each $T_{\alpha,\theta}$ . Given 
$q\in H(\text{div }; T_{\alpha,\theta}) \cap H^{\frac{1}{2}+\delta}(T_{\alpha,\theta})^2 (\delta>0) $,
the Fortin interpolation ${q_h}  = \{ \alpha_1 + \alpha_3 x_1, \alpha_2 + \alpha_3 x_2 \}$ ($\alpha_i$ being constants) satisfies,
$$
\int_{e_i} ({q_h} - q )\cdot \vec{n} \: \text{d} s =0, \quad i=1,2,3\:.
$$

To consider the error estimation for Fortin's interpolation, we quote a result about the error estimation for 
the Lagrange interpolation function. Define constant $C_F$ by
$$
C_F := \sup_{ q\in W(T_{\alpha,\theta}) } \frac{\|q \|}{|q|_1}\:.
$$
Here $W(T_{\alpha,\theta})$ is defined by
$$
W(T_{\alpha,\theta}):=\{ q \in H(T_{\alpha,\theta})^2 \:| \: \int_{e_i}q\cdot \vec{\tau} \mbox{d}s =0, i=1,2,3.\},
$$
where $\vec{\tau}$ denotes the unit tangent vector along edges.
Such a constant has been used to bound the Lagrange interpolation error constant (Theorem 2 of \cite{liu-kikuchi-2010}), 
which has  an explicit upper bound $C_F \le C_6(\alpha,\theta)$ as follows.
\begin{equation}
\label{eq:fortin-est-constant}
C_6(\alpha,\theta) := \frac{\left\{ c_1^2 + c_2^2+ 2c_1c_2 \cos^2 \theta +
(c_1 + c_2) \sqrt{c_1^2+c_2^2+2c_1c_2 \cos 2 \theta} \right\}^{1/2}}{\sqrt{2}\sin \theta} 
\end{equation}
where $c_i$ presents $C_i(\alpha,\theta) (i=1,2)$ for the purpose of abbreviation.

The following theorem gives the error constant for the Fortin interpolation, 
where the technique in the proof is following the one used in 
Theorem 5.1 of \cite{Carstensen2012} \footnote{The result below is an improvement of the error estimation of \cite{liu-kikuchi-2007}, which involves another constant $C_{7}$ along with the term $\|\mbox{div }q\|$, which however can be removed. }.

\begin{thm}
It holds for $q=\{q_1,q_2\} \in \left(H^1(T_{\alpha,\theta})\right)^2$ that 
\begin{gather}
\label{fortin-final-est}
\| q-\Pi_{\alpha,\theta}^{F}q \| \le C_6(\alpha,\theta) |q|_1 \: .
\end{gather}
\end{thm}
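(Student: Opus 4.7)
The plan is to reduce the Fortin estimate to the Lagrange-type estimate already associated with the constant $C_F$. Writing $v := q - \Pi_{\alpha,\theta}^F q$, the defining property of the Fortin interpolation gives $\int_{e_i} v \cdot \vec{n}\, ds = 0$ for $i=1,2,3$. The key trick (following Carstensen) is to \emph{rotate by $90^\circ$}: let $R = \bigl(\begin{smallmatrix} 0 & -1 \\ 1 & 0 \end{smallmatrix}\bigr)$, so that $Rw \cdot \vec{\tau} = w \cdot \vec{n}$ on every edge. Therefore $Rv \in W(T_{\alpha,\theta})$, and the definition of $C_F$ applied to $Rv$, combined with rotational invariance of both $L^2$ and $H^1$ seminorms, yields
\begin{equation*}
\|v\| = \|Rv\| \le C_F\, |Rv|_1 = C_F\, |v|_1.
\end{equation*}

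The remaining step is to show $|v|_1 \le |q|_1$, which would then give $\|v\| \le C_F |q|_1 \le C_6(\alpha,\theta)|q|_1$ by \eqref{eq:fortin-est-constant}. Here I would use the explicit form $\Pi_{\alpha,\theta}^F q = \{\alpha_1 + \alpha_3 x_1, \alpha_2 + \alpha_3 x_2\}$, whose Jacobian is the scalar multiple $\alpha_3 I$. The crucial identification is that $\alpha_3$ equals the mean divergence of $q$ over $T_{\alpha,\theta}$: indeed, since $\int_T \mathrm{div}(v) = \sum_i \int_{e_i} v\cdot \vec n\, ds = 0$ and $\mathrm{div}(\Pi^F q) = 2\alpha_3$ is constant, we get $2\alpha_3 |T_{\alpha,\theta}| = \int_T \mathrm{div}(q)\,dx$. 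A short computation then gives
\begin{equation*}
|v|_1^2 = \int_T |\nabla q - \alpha_3 I|^2\, dx = |q|_1^2 - 2\alpha_3 \int_T \mathrm{div}(q)\, dx + 2\alpha_3^2 |T_{\alpha,\theta}| = |q|_1^2 - 2\alpha_3^2 |T_{\alpha,\theta}| \le |q|_1^2,
\end{equation*}
i.e.\ the Fortin interpolant is optimal for the Frobenius-Jacobian functional among vector fields with constant symmetric gradient $\alpha_3 I$.

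Chaining the two inequalities and invoking the explicit bound $C_F \le C_6(\alpha,\theta)$ from \eqref{eq:fortin-est-constant} completes the argument. I expect the main obstacle to be making the rotation identity $Rw\cdot \vec\tau = w\cdot \vec n$ precise enough with signs to conclude $Rv \in W(T_{\alpha,\theta})$ (which only requires vanishing integrals, so orientation is harmless). The divergence-identification step is short but needs to be stated cleanly, since the optimality $|v|_1 \le |q|_1$ is the whole reason the constant on the right-hand side is simply $|q|_1$ rather than a sum $|q|_1 + \|\mathrm{div}\,q\|$, thereby removing the $C_7$ term that appeared in the original version of \cite{liu-kikuchi-2007}.
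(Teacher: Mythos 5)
Your proposal is correct and follows essentially the same route as the paper: rotate $v=q-\Pi^F_{\alpha,\theta}q$ by $\pi/2$ to land in $W(T_{\alpha,\theta})$ and invoke $C_F\le C_6(\alpha,\theta)$, then show $|v|_1\le |q|_1$ using the fact that $\operatorname{div}\Pi^F_{\alpha,\theta}q=2\alpha_3$ is the mean of $\operatorname{div}q$. Your direct expansion of $\int_T|\nabla q-\alpha_3 I|^2\,dx$ yields exactly the same identity $|v|_1^2=|q|_1^2-2\alpha_3^2|T_{\alpha,\theta}|$ that the paper obtains via its deviatoric-plus-divergence decomposition of the $H^1$ seminorm, so the two arguments differ only in bookkeeping.
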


\begin{proof}
Let $\hat{w}$ be the rotation of $w:=q-\Pi^F_{\alpha,\theta}q$ by $\pi/2$, 
then it is easy to verify that $\int_{e_i} \hat{w} \cdot \vec{\tau}~ \mbox{d}s=0$, $i=1,2,3$.
Hence, 
$$
(\|w\| = )~~\|\hat{w}\| \le C_6(\alpha,\theta) |\hat{w}|_1 ~~(= C_6(\alpha,\theta) |{w}|_1)\:.
$$
Rewrite the vector $w$ by $w=(w_1, w_2)$ and decompose $|w|_1^2$ by
$$
|w|_1^2 = \| w_{1,x}- \frac{\text{div }w}{2} \| ^2 + \| w_{1,y} \|^2 + \| w_{2,x} \|^2  + \| w_{2,y} - \frac{\text{div }w}{2} \|^2  + \|\text{div }w\|^2/2\:.
$$
Also, noticing that for $q_h=(q_{h_1},q_{h_2}):=\Pi^F_{\alpha,\theta}q$,
$$ q_{h_1,x}-\frac{\text{div } q_h}{2} =  q_{h_2,y}- \frac{\text{div }{q_h}}{2} = q_{h_1,y} = q_{h_2,x}=0$$
and the orthogonal decomposition of $\text{div }w$,
$$
\| \text{div }w \|^2 + \| \text{div } q_h \|^2 = \|\text{div }q\|^2\:,
$$
we have $|w|_1^2 \le |q|_1^2$, which leads to the conclusion. 
\end{proof}

\begin{remark} 
Because of the factor $\sin \theta$ in (\ref{eq:fortin-est-constant}), the maximum angle condition applies to 
estimate (\ref{fortin-final-est}) \cite{acosta-duran-2000,babuska-1976,kikuchi-2007}.
On the other hand, the estimates for $\Pi^0_{\alpha,\theta,h}$ and $\Pi^{1,N}_{\alpha,\theta,h}$ are free
from such conditions as may be seen from (\ref{nonc-constant-est}) and the comments there.\\
\end{remark}

{\bf GLOBAL INTERPOLATION OPERATORS}\\

So far, we have introduced and analyzed local interpolation operators 
$\Pi^0_{\alpha,\theta,h}, \Pi^{1,N}_{\alpha,\theta,h}$ and $\Pi^F_{\alpha,\theta,h}$.
For each $K\in \mathcal{T}^h$, we can find an appropriate $T_{\alpha,\theta,h}$ congruent to $K$
under a mapping $\Phi_K:K \to T_{\alpha,\theta,h}$. Then it is natural to define the $P_1$
non-conforming interpolation operator $\Pi_h:H_0^1(\Omega)\to V^h $ by 
$\Pi_h u |_{K} = \Pi_{\alpha,\theta,h}^{1,N} ( v|_{K} \circ \Phi_K^{-1}) \circ \Phi_K$
for $\forall v \in H_0^1(\Omega)$ and $\forall K \in \mathcal{T}^h$. 
Similarly, the orthogonal projection operator $Q_h:L_2(\Omega)\to X^h$ is related to $\Pi^0_{\alpha,\theta,h}$, while the global Fortin operator $\Pi_h^F$
is defined through $\Pi^F_{\alpha,\theta,h}$, $\Phi_K$ and the Piola transformation for 2D contravariant vector fields \cite{arnold-1985}.

For each $K\in \mathcal{T}^h$, define $\{\alpha_K,\theta_K,h_K \}$ as $\{\alpha,\theta,h\}$ of the associated $T_{\alpha,\theta,h}$.
Then, our analysis shows that the estimates in (\ref{interpolation-est}) can be concretely given by, for 
$\forall v \in H_0^1(\Omega)\cap H^2(\Omega)$ and $\forall g\in H^1(\Omega) + V^h$,
\begin{equation*}
\label{interpolation-est}
\begin{array}{ll}
\| v-\Pi_h v \| \le C_5^h h_{\ast}^2 |v|_2 \le C_0^h C_{\{1,2\}}^h h_{\ast}^2 |v|_2, & 
\| \nabla v - \nabla \Pi_h v \| \le C_4^h h_\ast  |v|_2 \le C_0^h h_\ast  |v|_2 \\
\| \nabla v- \Pi_h^F \nabla v \| \le C_6^h  h_\ast |v|_2, & 
\| g - Q_h g \| \le C_0^h h_\ast  \| \nabla_h g \|
\end{array}
\end{equation*}
where 
\begin{equation}
\label{def:mesh_h}
h_\ast = \max_{K\in T^h} h_K, \quad C_J^h:=\max_{K\in \mathcal{T}^h} C_J(\alpha_K,\theta_K) \quad (J=0,4,5,6,7,\{1,2\}).
\end{equation}

\begin{remark} Relations such as (\ref{solution-descomp}), (\ref{mixed-fem-solution}) and (\ref{mixed-fem-solution-detail})
may suggest the possibility of finding interpolations for $\nabla u$ in $W^h$ than the one by the Fortin operator, which 
are free from the maximum angle condition \cite{babuska-1976}. However, $\nabla_h(\Pi_hu+\alpha_h)$, for example, 
is not shown to belong to $W^h$, because we cannot prove the inter-element continuity of normal components unlike 
$\nabla_h \hat{u}_h$. Our numerical results show that the maximum angle condition is probably essential for the non-conforming 
$P_1$ triangle. See also \cite{acosta-duran-2000} for related topics.\\
\end{remark}

{\bf NUMERICAL RESULTS}\\

Firstly, we performed numerical computations to see the actual dependence of various constants on $\alpha$ and $\theta$
by adopting the conforming $P_1$ element and a kind of discrete Kirchhoff plate bending element \cite{kikuchi-1995}, the latter of which is used to deal with directly the 4-th
order partial differential eigenvalue problems related to $C_4(\alpha,\theta)$ and $C_5(\alpha,\theta)$. That is,
we obtained some numerical results for $C_4(\alpha)$ and $C_5(\alpha)$ ($\theta=\pi/2$) together with their upper bounds. 
We used the uniform triangulation of the entire domain $T_\alpha$ : $T_\alpha$ is subdivided into small triangles, all being congruent to 
$T_{\alpha,\pi/2,h}$ with e.g. $h=1/20$.

The left-hand side of Fig.\ref{fig-c4-c5} illustrates the graphs of approximate $C_4(\alpha)$ and $C_0(\alpha)$
versus $\alpha \in ]0,1]$, while the right-hand side does similar graphs for $C_5(\alpha)$ and $C_0(\alpha)C_{\{1,2\}}(\alpha)$.
In both cases, the theoretical upper bounds based on () give fairly good approximations to the considered constants $C_4(\alpha)$
and $C_5(\alpha)$. Asymptotic behaviors of the constants for $\alpha \downarrow 0+$ observed in the figures can be analyzed as in 
\cite{kikuchi-liu-2007}.

\begin{figure}[h]
\label{fig-c4-c5}
\includegraphics[width=4.5in]{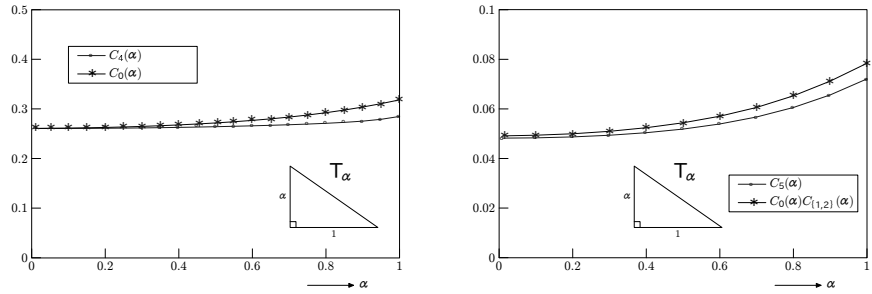}
\caption{ Numerical results for $C_{4}(\alpha)$ \& $C_{0}(\alpha)$ (left), and for $C_{5}(\alpha)$ \& $C_{0}(\alpha)C_{\{1,2\}}(\alpha)$ (right); $0<\alpha \le 1$ }
\end{figure}

We also tested numerically the validity of our a priori error estimate for $\| \nabla u - \nabla_h u_h \|$. That is,we
choose $\Omega$ as the unit square $\{x=\{x_1,x_2\}; 0<x_1,x_2<1\}$ and $f$ as $f(x_1,x_2) = \sin \pi x_1 \sin \pi x_2 $,
and consider the $N \times N$ Friedrichs-Keller type uniform triangulations $(N\in \mathcal{N})$. In such situation, 
$u(x_1,x_2)=\frac{1}{2\pi^2} \sin \pi x_1 \sin \pi x_2$, and all the triangles are congruent to a right isosceles triangle
$T_{1,\pi/2,1/N}$, i.e., $h_\ast=h=1/N$. Moreover, we can use the following values or upper bounds for necessary constants:
\begin{gather*}
C_0^h = C_0 = 1/\pi,\quad C_{\{1,2\}}^h = C_{\{1,2\}} < 1/4, \quad 
 C_6^h=C_1=C_2< 1/2. 
\end{gather*}
Moreover, under current boundary condition and domain shape, we have $|u|_2=\|\Delta u\|=\|f\|$; see, e.g., 
Theorem 4.3.1.4 of \cite{grisvard2011}. Then, since $f\in H^1(\Omega)$, the a priori error estimation is given as,
$$
\|\nabla u -\nabla_h u_h\| \le h_\ast \left( \frac{1}{\pi^2} \|f\|^2 + (\frac{1}{2}\|f\|+ \frac{h_\ast}{\pi^2} \|\nabla f\| )^2 \right)^{1/2}
$$

\begin{figure}[hbtp]
\label{numerical-error-est}
\includegraphics[width=2.5in]{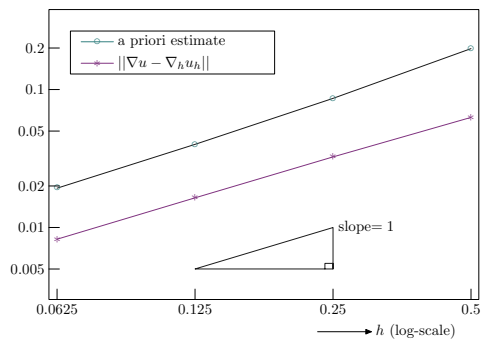}
\caption{ $\| \nabla u - \nabla_h u_h\|$ and its a priori estimates v.s $h$ }
\end{figure}

Figure \ref{numerical-error-est} illustrates the comparison of the actual $\|\nabla u - \nabla _h u_h\|$ and 
its a priori estimate based on our analysis. The difference is still large, but anyway our analysis appears to give
correct upper bounds and order of errors. Probably, a posteriori estimation mentioned previously would give more realistic results. \footnote{Another kind a priori error estimation is given in 
\cite{Carstensen2012}, which gives larger (worse) estimation compared to our proposed estimation, if the two estimations are applied to the example used in \cite{Carstensen2012}.}\\

\noindent
{\bf CONCLUDING REMARKS}

\noindent
We have obtained some theoretical and numerical results for several error constants associated to the non-conforming $P_1$ triangle.
These results are hoped to be effectively used in quantitative error estimates, which are necessary for adaptive mesh refinements 
\cite{bangerth2013adaptive} and numerical verifications. Especially for numerical verification of partial differential equations by Nakao's method \cite{nakao-2001}, accurate bounding of various error constants is essential. Moreover, we are planning to extend our analysis 
to its 3D counterpart, i.e., the non-conforming $P_1$ tetrahedron with face DOF's.



\bibliographystyle{plain}
\bibliography{liu}

\vskip 0.2cm

{\small

\noindent
{\em Authors' addresses}:

\noindent
{\em Xuefeng LIU}~~ Graduate School of Science and Technology, Niigata University, 8050 Ikarashi 2-no-cho, Nishi-ku, Niigata City, Niigata, 950-2181, Japan; 
e-mail: \texttt{xfliu@\allowbreak math.sc.niigata-u.ac.jp}\\
\noindent
{\em Fumio KIKUCHI}~~ Graduate School of Mathematical Sciences, University of Tokyo, 3-8-1, Komaba, Meguro, Tokyo, 153-8914, Japan; e-mail: \texttt{kikuchi@\allowbreak ms.u-tokyo.ac.jp}
}

\end{document}